\documentclass[a4paper]{amsart}
\usepackage{amsfonts,amsmath,amssymb,amscd}
\usepackage[cp1251]{inputenc}
\usepackage[T2A]{fontenc}

\textwidth=16cm \textheight=24cm \hoffset=-17mm \voffset=-15mm

\newtheorem{theorem}{Theorem}

\theoremstyle{remark}

\theoremstyle{definition}

\newtheorem{example}[theorem]{Example}

%Blackboard bold letters
\def\C{\mathbb C}
\def\R{\mathbb R}
\def\T{\mathbb T}
\def\Z{\mathbb Z}

% bold italic letters
\newcommand{\mb}[1]{{\textbf {\textit#1}}}

\def\ge{\geqslant}
%------------------------------
\begin{document}
\title{Hamiltonian minimal Lagrangian submanifolds\\in toric varieties}

\author{Andrey Mironov}
\address{Sobolev Institute of Mathematics, 4 Acad. Koptyug avenue, 630090 Novosibirsk, Russia,
\quad \emph{and}\newline\indent Laboratory of Geometric Methods in
Mathematical Physics, Moscow State University}
\email{mironov@math.nsc.ru}

\author{Taras Panov}
\address{Department of Mathematics and Mechanics, Moscow
State University%, Leninskie Gory, 119991 Moscow, Russia,
\newline\indent Institute for Theoretical and Experimental Physics,
Moscow, Russia,\quad \emph{and}
\newline\indent Institute for Information Transmission Problems,
Russian Academy of Sciences}
\email{tpanov@mech.math.msu.su}

\thanks{The first author was supported by grants МД-5134.2012.1 and НШ-544.2012.1 from the
President of Russia. The second author was supported by grants
МД-111.2013.1 and НШ-4995-2012.1 from the President of Russia and
RFBR grant~11-01-00694. Both authors were supported by RFBR
grant~12-01-92104-ЯФ, grants from Dmitri Zimin's `Dynasty'
foundation, and grant no.~2010-220-01-077 of the Government of
Russia.}

\maketitle

Hamiltonian minimality ($H$-minimality for short) for Lagrangian
submanifolds is a symplectic analogue of Riemannian minimality. A
Lagrangian immersion is called $H$-minimal if the variations of
its volume along all Hamiltonian vector fields are zero. This
notion was introduced in the work of Y.-G.~Oh~\cite{oh93} in
connection with the celebrated \emph{Arnold conjecture} on the
number of fixed points of a Hamiltonian symplectomorphism.

In~\cite{miro04} and~\cite{mi-pa} the authors defined and studied
a family of $H$-minimal Lagrangian submanifolds in~$\C^m$ arising
from intersections of real quadrics. Here we extend this
construction to define $H$-minimal submanifolds in toric
varieties.

The initial data of the construction is an intersection of $m-n$
Hermitian quadrics in~$\C^m$:
\begin{equation}\label{zgamma}
  \mathcal Z=\Bigl\{\mb z=(z_1,\ldots,z_m)\in\C^m\colon
  \sum_{k=1}^m\gamma_{jk}|z_k|^2=\delta_j\quad\text{for }
  j=1,\ldots,m-n\Bigr\}.
\end{equation}

We assume that the intersection is nonempty, nondegenerate and
rational; these conditions can be expressed in terms of the
coefficient vectors
$\gamma_k=(\gamma_{1k},\ldots,\gamma_{m-n,k})^t\in\R^{m-n}$ as
follows:
\begin{itemize}
\item[(a)] $\delta\in
\R_\ge\langle\gamma_1,\ldots,\gamma_m\rangle$ ($\delta$ is in the
cone generated by $\gamma_1,\ldots,\gamma_m$);

\item[(b)] if $\delta\in\R_\ge\langle
\gamma_{i_1},\ldots\gamma_{i_p}\rangle$, then $p\ge m-n$;

\item[(c)]
the vectors $\gamma_1,\ldots,\gamma_m$ span a lattice $L$ of full
rank in~$\R^{m-n}$.
\end{itemize}

Under these conditions, $\mathcal Z$ is a smooth
$(m+n)$-dimensional submanifold in $\C^m$, and
\[
  T_\varGamma=
  \bigl\{\bigr(e^{2\pi i\langle\gamma_1,\varphi\rangle},
  \ldots,e^{2\pi
  i\langle\gamma_m,\varphi\rangle}\bigl),\quad\varphi\in\R^{m-n}\bigr\}=
  \R^{m-n}/\displaystyle L^*
\]
is an $(m-n)$-dimensional torus. We represent elements of
$T_\varGamma$ by $\varphi\in\R^{m-n}$. We also define
\[
  D_\varGamma=({\textstyle\frac12}L^*)/L^*\cong(\Z_2)^{m-n}.
\]
Note that $D_\varGamma$ embeds canonically as a subgroup in
$T_\varGamma$.

Let $\mathcal R\subset\mathcal Z$ be the subset of real points,
which can be written by the same equations in real coordinates:
\[
  \mathcal R=\Bigl\{\mb u=(u_1,\ldots,u_m)\in\R^m\colon
  \sum_{k=1}^m\gamma_{jk}u_k^2=\delta_j\quad\text{for }
  j=1,\ldots,m-n\Bigr\}.
\]
We `spread' $\mathcal R$ by the action of $T_\varGamma$, that is,
consider the set of $T_\varGamma$-orbits through $\mathcal R$.
More precisely, we consider the map
\begin{align*}
  j\colon\mathcal R\times T_\varGamma &\longrightarrow \C^m,\\
  (\mb u,\varphi) &\mapsto \mb u\cdot\varphi=\bigl(u_1e^{2\pi
i\langle\gamma_1,\varphi\rangle},\ldots,u_me^{2\pi
i\langle\gamma_m,\varphi\rangle}\bigr)
\end{align*}
and observe that $j(\mathcal R\times T_\varGamma)\subset\mathcal
Z$. We let $D_\varGamma$ act on $\mathcal R_\varGamma\times
T_\varGamma$ diagonally; this action is free since it is free on
the second factor. The quotient
\[
  N=\mathcal R\times_{D_\varGamma} T_\varGamma
\]
is an $m$-dimensional manifold.

\begin{theorem}[\cite{miro04}]\label{hmin}
The map $j\colon\mathcal R\times T_\varGamma\to\C^m$ induces an
$H$-minimal Lagrangian immersion $i\colon N\looparrowright\C^m$.
\end{theorem}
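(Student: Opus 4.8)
The plan is to establish the three separate assertions in turn: that $j$ descends to a well-defined immersion $i$ of $N$, that the image is Lagrangian, and that $i$ is $H$-minimal. For the first point I would observe that $j(\mb u\cdot d,\varphi)=j(\mb u,d+\varphi)$ for $d\in D_\varGamma$, since $d\in\frac12 L^*$ means $\langle\gamma_k,d\rangle\in\frac12\Z$ so that $e^{2\pi i\langle\gamma_k,d\rangle}=\pm1$ and multiplication by this sign is exactly the action of $d$ on the $u_k$-coordinate (real points get multiplied by $\pm1$). Hence $j$ factors through the quotient $N=\mathcal R\times_{D_\varGamma}T_\varGamma$. To see that the resulting map $i$ is an immersion, I would compute the differential of $j$: on the $T_\varGamma$-factor it is injective because the torus acts almost freely on $\mathcal Z$ (using nondegeneracy (b)), and on the $\mathcal R$-factor the tangent space to $\mathcal R$ maps isomorphically onto a real subspace transverse to the torus directions; the only collisions of $j$ come precisely from the $D_\varGamma$-action (sign changes of real coordinates combined with the half-lattice shift), which is why we quotient by it. Counting dimensions, $\dim N=\dim\mathcal R+\dim T_\varGamma=((m-(m-n))+(m-n)=m$, matching the middle-dimensional Lagrangian requirement in $\C^m$.

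Next I would verify the Lagrangian condition. Writing the standard symplectic form as $\omega=\sum_{k=1}^m dx_k\wedge dy_k$ with $z_k=x_k+iy_k$, I would pull back $\omega$ along $j$ and show it vanishes on $\mathcal R\times T_\varGamma$. Tangent vectors come in two types: those tangent to $\mathcal R$ (where $y_k=0$, so $dy_k$ restricts to something proportional to $u_k\,d\langle\gamma_k,\varphi\rangle$ after spreading) and those along the torus orbit directions $\partial/\partial\varphi$. On $\mathcal R$ alone $\omega$ is zero since $y\equiv0$. For a pair consisting of one torus direction and one $\mathcal R$-direction, or two torus directions, the computation reduces to $\sum_k u_k^2\,d\langle\gamma_k,\varphi\rangle\wedge(\text{something})$, and the quadric equations $\sum_k\gamma_{jk}u_k^2=\delta_j$ — more precisely their differentials $\sum_k\gamma_{jk}u_k\,du_k=0$ together with the defining linear relations among the $\gamma_k$ — force the relevant sums to cancel. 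This is essentially the same computation as in the $\C^m$-quadric case of \cite{miro04}, \cite{mi-pa}, and I would reference it; the key structural fact is that the torus $T_\varGamma$ is generated exactly by the normals to the quadrics, so its orbit directions are "$\omega$-paired" with the constraint gradients.

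Finally, for $H$-minimality the cleanest route is to exhibit the immersion as invariant under enough symmetry and then use the criterion that a Lagrangian immersion is $H$-minimal iff its mean curvature form is closed, equivalently iff the Lagrangian angle (the argument of a certain section of the canonical bundle) is a harmonic function, equivalently — by a result of Oh — iff it has the property that $\int\langle H,X\rangle\,dV=0$ for all Hamiltonian $X$. Concretely I would compute the Maslov/Lagrangian angle $\theta$ of $i$: for the map $j$, the product $z_1\cdots z_m$ restricted to $\mathcal R\times T_\varGamma$ equals $u_1\cdots u_m\cdot e^{2\pi i\langle\gamma_1+\cdots+\gamma_m,\varphi\rangle}$, so the Lagrangian angle is (up to constants) the linear function $\varphi\mapsto 2\pi\langle\gamma_1+\cdots+\gamma_m,\varphi\rangle$ on the torus factor, constant along $\mathcal R$. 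A linear (hence harmonic, after descending to the flat torus) phase function means the mean curvature form $dH^\flat$ pulls back from a closed form, giving $H$-minimality. The main obstacle I anticipate is making the immersion/transversality argument fully rigorous at points of $\mathcal R$ where some coordinates $u_k$ vanish (there the torus no longer acts effectively in that coordinate, so one must check $dj$ stays injective using the nondegeneracy hypothesis (b) that bounds how many $\gamma$'s a face of the cone can use), and secondarily verifying that "Lagrangian angle is linear on the torus" genuinely yields the $H$-minimal Euler–Lagrange equation rather than merely minimality — this requires invoking Oh's variational characterization carefully, which I would cite from \cite{oh93}.
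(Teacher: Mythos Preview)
The paper does not actually prove this theorem: it is quoted from \cite{miro04} and stated without argument, so there is no in-paper proof to compare your proposal against.  Your outline is essentially the direct approach of \cite{miro04} and \cite{mi-pa}: check that $j$ is $D_\varGamma$-equivariant, verify Lagrangianity by pairing $\mathcal R$-directions against torus-orbit directions (the key structural point being that $T_{\mb u}\mathcal R$ is exactly the orthogonal complement in $\R^m$ of the span of the gradient vectors $(\gamma_{jk}u_k)_k$, which are the orbit generators), and then argue $H$-minimality via the Lagrangian angle.  One slip to fix: the Lagrangian angle is \emph{not} the argument of the coordinate product $z_1\cdots z_m$; it is defined by $dz_1\wedge\cdots\wedge dz_m|_{TN}=e^{i\theta}\,\mathrm{vol}_N$.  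Your conclusion that $\theta$ is affine in $\varphi$ is nevertheless correct, for the right reason: the $T_\varGamma$-action rescales $dz_1\wedge\cdots\wedge dz_m$ by $e^{2\pi i\langle\sum_k\gamma_k,\varphi\rangle}$, and at a real point the tangent space has the form $W^\perp\oplus iW\subset\C^m$, on which the holomorphic volume form evaluates to $\pm i^{m-n}$, so $\theta_0(\mb u)$ is locally constant.  You are also right to flag the last step as the delicate one: the induced metric on $N$ is not a product, so ``$\theta$ linear in $\varphi$'' does not give harmonicity for free; one must actually compute that $d\theta$ is co-closed, and this is where most of the analytic content sits.

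It is worth noting that, although the present paper does not prove Theorem~\ref{hmin}, its proof of the subsequent theorem on toric targets implicitly contains an alternative argument: one observes that the $T$-quotient of $N$ is the real locus of a toric variety, hence the fixed set of an antiholomorphic isometric involution, hence totally geodesic and in particular minimal; then \cite[Cor.~2.7]{dong07} lifts minimality downstairs to $H$-minimality upstairs.  Specialised to the case where the ambient toric variety is $\C^m$ itself, this gives a proof of Theorem~\ref{hmin} that avoids the Lagrangian-angle computation altogether --- a genuinely different route from the one you sketch.
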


Intersection of quadrics~\eqref{zgamma} is invariant with respect
to the diagonal action of the standard torus $\T^m\subset\C^m$.
The quotient $\mathcal Z/\T^m$ is identified with the set of
nonnegative solutions of the system of linear equations
$\sum_{k=1}^m\gamma_ky_k=\delta$. This set may be described as a
convex $n$-dimensional polyhedron
\begin{equation}\label{ptope}
  P=\bigl\{\mb x\in\R^n\colon\langle\mb a_i,\mb x\rangle+b_i\ge0\quad\text{for }
  i=1,\ldots,m\bigr\},
\end{equation}
where $(b_1,\ldots,b_m)$ is any solution and the vectors $\mb
a_1,\ldots,\mb a_m\in\R^n$ form the transpose of a basis of
solutions of the homogeneous system
$\sum_{k=1}^m\gamma_ky_k=\mathbf 0$.  We refer to $P$ as the
\emph{associated polyhedron} of the intersection of
quadrics~\eqref{zgamma}. The vector configurations
$\gamma_1,\ldots,\gamma_m$ and $\mb a_1,\ldots,\mb a_m$ are
\emph{Gale dual}.

Let $\Lambda$ denote the lattice of rank $n$ spanned by $\mb
a_1,\ldots,\mb a_m$. Polyhedron~\eqref{ptope} is called
\emph{Delzant} if, for any vertex $\mb x\in P$, the vectors $\mb
a_{i_1},\ldots,\mb a_{i_k}$ normal to the facets meeting at $\mb
x$ form a basis of the lattice~$\Lambda$. A Delzant $n$-polyhedron
is \emph{simple}, that is, there are exactly $n$ facets meeting at
each of its vertices.

\begin{theorem}[\cite{mi-pa}]
The immersion $i\colon N\looparrowright\C^m$ is an embedding if
and only if the associated polyhedron $P$ is Delzant.
\end{theorem}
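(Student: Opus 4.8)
The plan is to prove both implications at once by reducing the question of whether $i$ is an embedding to a combinatorial condition on the associated polytope $P$, and then to recognize that condition as the Delzant condition via Gale duality. Since $\mathcal R$ is a closed subset of the compact manifold $\mathcal Z$ and $T_\varGamma$ is a torus, $N=\mathcal R\times_{D_\varGamma}T_\varGamma$ is compact, so the immersion $i$ is an embedding if and only if it is injective, and this is what I would analyze.

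First I would reformulate injectivity. Because $j(\mb u,\varphi+\psi)=j(\mb u,\varphi)\cdot\psi$, where $\cdot$ denotes the action of $T_\varGamma\subset\T^m$ on $\C^m$, an equality $j(\mb u,\varphi)=j(\mb u',\varphi')$ is equivalent to $\mb u\cdot(\varphi-\varphi')=\mb u'$. Hence $i$ is injective if and only if: whenever $\mb u\in\mathcal R$ and $\psi\in T_\varGamma$ satisfy $\mb u\cdot\psi\in\mathcal R$ (equivalently $\mb u\cdot\psi\in\R^m$), then $\psi\in D_\varGamma$. Indeed, if this holds and $j(\mb u,\varphi)=j(\mb u',\varphi')$, then $\psi:=\varphi-\varphi'\in D_\varGamma$, and since $2\psi=0$ in $T_\varGamma$ this element sends $(\mb u,\varphi)$ to $(\mb u\cdot\psi,\varphi+\psi)=(\mb u',\varphi')$, so $[\mb u,\varphi]=[\mb u',\varphi']$ in $N$; conversely, if there is a $\psi\notin D_\varGamma$ with $\mb u\in\mathcal R$ and $\mb u\cdot\psi\in\mathcal R$, then $j(\mb u,\psi)=j(\mb u\cdot\psi,0)$ while $[\mb u,\psi]\ne[\mb u\cdot\psi,0]$ (any $D_\varGamma$-element taking the second coordinate $\psi$ to $0$ would have to be $-\psi\notin D_\varGamma$).

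Next I would make the reality condition explicit. For $\mb u\in\mathcal R$ the point $\mb u\cdot\psi=\bigl(u_ke^{2\pi i\langle\gamma_k,\psi\rangle}\bigr)_k$ lies in $\R^m$ exactly when $\langle\gamma_k,\psi\rangle\in\frac12\Z$ for every $k$ with $u_k\ne0$, a condition depending on $\mb u$ only through $\omega:=\{k:u_k=0\}$. Writing $L_\omega$ for the sublattice of $L$ generated by $\{\gamma_k:k\notin\omega\}$, the admissible $\psi$ form the subgroup $(\tfrac12 L_\omega^*)/L^*$ of $T_\varGamma$, which contains $D_\varGamma=(\tfrac12 L^*)/L^*$ and equals it if and only if $L_\omega=L$ (using that $L$ has full rank by (c)). Under the identification $\mathcal Z/\T^m=P$, sending the $\T^m$-orbit of $\mb z$ to the point of $P$ with $y_k=|z_k|^2$, the locus $z_k=0$ maps onto the facet $F_k=\{\mb x\in P:\langle\mb a_k,\mb x\rangle+b_k=0\}$, so the sets $\omega$ realized by points of $\mathcal R$ are exactly the sets $I_G=\{k:G\subseteq F_k\}$ of facets containing a face $G$ of $P$. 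Since every face contains a vertex and $L_\omega$ only decreases as $\omega$ grows, $i$ is injective if and only if $L_{I_{\mb x}}=L$ for every vertex $\mb x$ of $P$; recall $P$ is simple, so $|I_{\mb x}|=n$ and $\{\gamma_k:k\notin I_{\mb x}\}$ consists of $m-n$ vectors.

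Finally, Gale duality closes the gap: for a vertex $\mb x$ with facet set $\omega=I_{\mb x}$, $|\omega|=n$, the normals $\{\mb a_k:k\in\omega\}$ form a $\Z$-basis of $\Lambda$ if and only if the Gale-dual vectors $\{\gamma_k:k\notin\omega\}$ form a $\Z$-basis of $L$, i.e. if and only if $L_\omega=L$ (this also handles the degenerate case, since if $\{\gamma_k:k\notin\omega\}$ is linearly dependent then $L_\omega\subsetneq L$ and simultaneously $\{\mb a_k:k\in\omega\}$ is not a basis). Combined with the previous step, $i$ is injective — hence an embedding — precisely when the normals at every vertex of $P$ form a lattice basis, which is the Delzant condition. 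I expect the main obstacle to be exactly this last step: one must pin down how the lattices $L=\langle\gamma_1,\dots,\gamma_m\rangle$ and $\Lambda=\langle\mb a_1,\dots,\mb a_m\rangle$ interact under Gale duality so that an $n$-subset of the $\mb a_k$ is a $\Z$-basis of $\Lambda$ exactly when the complementary $(m-n)$-subset of the $\gamma_k$ is a $\Z$-basis of $L$ — this is the unimodularity-preservation property of Gale duality, and the only delicate point is verifying that the $\mb a_k$ were taken as an \emph{integral} basis of the relations and carrying one global index computation. The remaining ingredients (compactness, the torus reduction, and the phase computation) are either standard or direct checks.
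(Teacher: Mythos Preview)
The paper does not contain a proof of this theorem; it is simply quoted from the reference~\cite{mi-pa} with no argument supplied here, so there is nothing in the present paper to compare your proposal against.

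That said, your outline is sound and is the standard route to this result. One small correction: you assert that $\mathcal Z$ (hence $N$) is compact, but conditions (a)--(c) do not force this---the paper explicitly allows $P$ to be an unbounded \emph{polyhedron} (for instance $|z_1|^2-|z_2|^2=1$ satisfies (a)--(c)). The fix is immediate: the map $i$ is proper because $\bigl|i([\mb u,\varphi])_k\bigr|=|u_k|$, and a proper injective immersion is an embedding, so the reduction to injectivity still goes through. The remainder of your argument---the translation of injectivity into the lattice condition $L_\omega=L$ for every zero-set $\omega$ realised on $\mathcal R$, the identification of the maximal such $\omega$ with vertices of $P$ via the moment map, and the final appeal to Gale duality---is correct. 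You have also correctly isolated the one genuinely delicate point: one must choose the $\mb a_k$ so that $0\to\Z^n\xrightarrow{A^{t}}\Z^m\xrightarrow{\varGamma}\Z^{m-n}\to 0$ is exact over~$\Z$ (i.e.\ the rows of $A$ are an \emph{integral} basis of $\ker\varGamma$), after which complementary maximal minors of $A$ and $\varGamma$ agree up to sign, giving precisely the equivalence ``$\{\mb a_k:k\in\omega\}$ is a $\Z$-basis of $\Lambda$ $\Leftrightarrow$ $\{\gamma_k:k\notin\omega\}$ is a $\Z$-basis of $L$'' that you need.
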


Now we consider two sets of quadrics:
\begin{align*}
  \mathcal Z_{\varGamma}&=\Bigl\{\mb z\in\C^m\colon
  \sum\nolimits_{k=1}^m\gamma_k|z_k|^2=\mb c\Bigr\},\quad \gamma_k,\mb c\in\R^{m-n};\\
  \mathcal Z_{\varDelta}&=\Bigl\{\mb z\in\C^m\colon
  \sum\nolimits_{k=1}^m\delta_k|z_k|^2=\mb d\Bigr\},\quad \delta_k,\mb
  d\in\R^{m-\ell};
\end{align*}
such that $\mathcal Z_{\varGamma}$, $\mathcal Z_{\varDelta}$
\emph{and} $\mathcal Z_{\varGamma}\cap\mathcal Z_{\varDelta}$
satisfy conditions (a)--(c) above. Assume also that the polyhedra
associated with $\mathcal Z_{\varGamma}$, $\mathcal Z_{\varDelta}$
and $\mathcal Z_{\varGamma}\cap\mathcal Z_{\varDelta}$ are
Delzant.

The idea is to use the first set of quadrics to produce a toric
manifold $V$ via symplectic reduction, and then use the second set
of quadrics to define an $H$-minimal Lagrangian submanifold
in~$V$.

We define the real intersections of quadrics $\mathcal
R_{\varGamma}$, $\mathcal R_{\varDelta}$, the tori
$T_{\varGamma}\cong\T^{m-n}$, $T_{\varDelta}\cong\T^{m-\ell}$, and
the groups $D_{\varGamma}\cong\Z_2^{m-n}$,
$D_{\varDelta}\cong\Z_2^{m-\ell}$ as above.

%A projective nonsingular toric variety can be defined as the
%symplectic quotient of $\C^m$ by a torus.
We consider the toric variety $V$ obtained as the symplectic
quotient of $\C^m$ by the torus corresponding to the first set of
quadrics: $V=\mathcal Z_{\varGamma}/T_{\varGamma}$. It is a
K\"ahler manifold of real dimension~$2n$. The quotient $\mathcal
R_{\varGamma}/D_{\varGamma}$ is the set of real points of $V$ (the
fixed point set of the complex conjugation, or the real toric
manifold); it has dimension~$n$. Consider the subset of $\mathcal
R_{\varGamma}/D_{\varGamma}$ defined by the second set of
quadrics:
\[
  \mathcal S=(\mathcal R_{\varGamma}\cap\mathcal
  R_{\varDelta})/D_{\varGamma},
\]
we have $\dim\mathcal S=n+\ell-m$. Finally define the
$n$-dimensional submanifold of $V$:
\[
  N=\mathcal S\times_{D_{\varDelta}}T_{\varDelta}.
\]

\begin{theorem}
$N$ is an $H$-minimal Lagrangian submanifold in~$V$.
\end{theorem}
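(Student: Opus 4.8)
The plan is to deduce the statement from Theorem~\ref{hmin} by realizing $N\subset V$ as the image of the $H$-minimal Lagrangian immersion associated with the \emph{combined} intersection $\mathcal Z_\varGamma\cap\mathcal Z_\varDelta$, and then pushing that immersion down through the symplectic reduction $\mathcal Z_\varGamma\to V=\mathcal Z_\varGamma/T_\varGamma$. First I would apply Theorem~\ref{hmin} (and its Delzant refinement) to the set of $2m-n-\ell$ quadrics cutting out $\mathcal Z_\varGamma\cap\mathcal Z_\varDelta$: since by hypothesis this combined intersection satisfies (a)--(c) and its associated polyhedron is Delzant, we obtain an $H$-minimal Lagrangian embedding $\widetilde N=(\mathcal R_\varGamma\cap\mathcal R_\varDelta)\times_{D_{\varGamma\cup\varDelta}}(T_\varGamma\times T_\varDelta)\hookrightarrow\C^m$, where $D_{\varGamma\cup\varDelta}\cong\Z_2^{2m-n-\ell}$ and $T_\varGamma\times T_\varDelta$ is the corresponding torus. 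The key observation is that $\widetilde N$ is $T_\varGamma$-invariant (the spreading torus contains the $T_\varGamma$-factor), that $\widetilde N\subset\mathcal Z_\varGamma$, and that the $T_\varGamma$-quotient of $\widetilde N$ is exactly $N=\mathcal S\times_{D_\varDelta}T_\varDelta$; a dimension count ($\dim\widetilde N=m$, $\dim T_\varGamma=m-n$, $\dim N=n$) confirms this.

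Next I would verify that Lagrangianness descends under symplectic reduction. Recall the standard fact: if $\widetilde N$ is a Lagrangian submanifold of $\C^m$ contained in the zero level set $\mathcal Z_\varGamma$ of the moment map $\mu_\varGamma$ and invariant under $T_\varGamma$, then its image $N$ under $\mathcal Z_\varGamma\to V$ is Lagrangian in $(V,\omega_V)$. Indeed, $\pi^*\omega_V=\omega|_{\mathcal Z_\varGamma}$ on the reduced space, so for tangent vectors to $N$ lifted to tangent vectors of $\widetilde N$ inside $\mathcal Z_\varGamma$, the form $\omega_V$ pulls back to $\omega|_{\widetilde N}=0$; and the dimension is right, $\dim N=n=\tfrac12\dim V$. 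This step is essentially formal once the setup is in place, so I would keep it brief, citing the reduction of Lagrangians.

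The main work, and the expected principal obstacle, is to show that $H$-minimality survives the reduction. This does \emph{not} follow from a general nonsense argument, because $H$-minimality is a metric notion (it concerns the volume functional for the induced Riemannian metric) and is not automatically preserved under symplectic quotients for arbitrary Lagrangians. The plan is to exploit the special structure: $V$ carries the K\"ahler metric induced from the flat K\"ahler metric on $\C^m$ via reduction, and $\widetilde N$ sits inside $\mathcal Z_\varGamma$ as an orbit-type construction $\mathcal R\times_{D}T$. I would show that the horizontal part of the induced metric on $\widetilde N$ (orthogonal to the $T_\varGamma$-orbits) descends isometrically to the induced metric on $N\subset V$, so that the volume form of $N$ is, up to the volume of the $T_\varGamma$-orbit (which is \emph{constant} along $\mathcal Z_\varGamma$ by the homogeneity of the quadrics --- each $|z_k|$ is determined on $\mathcal R$ and the orbit is a flat torus of fixed size), a constant multiple of the volume form of $\widetilde N$ integrated over the fibre. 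Crucially, Hamiltonian vector fields on $V$ lift to $T_\varGamma$-invariant Hamiltonian vector fields on $\C^m$ tangent to $\mathcal Z_\varGamma$ (pull back the Hamiltonian, extend $T_\varGamma$-invariantly); hence a Hamiltonian variation of $N$ in $V$ lifts to a Hamiltonian variation of $\widetilde N$ in $\C^m$ preserving the $\mathcal R\times_D T$ structure and the fibrewise volume, so the first variation of $\mathrm{vol}(N)$ equals (a constant times) the first variation of $\mathrm{vol}(\widetilde N)$, which vanishes by Theorem~\ref{hmin}. Assembling this comparison of first variations carefully --- in particular checking that the normal variation fields correspond correctly and that no boundary terms appear --- is where the real care is needed; the rest is bookkeeping with the Gale-dual data and the identifications of the various tori and $\Z_2$-groups.
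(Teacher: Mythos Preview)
Your route is genuinely different from the paper's, and it contains a gap at exactly the point you flagged as the ``expected principal obstacle''.

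The claim that the $T_\varGamma$-orbit volume is constant along $\mathcal Z_\varGamma$ (or even along $\widetilde N$) is false. The Gram matrix of the infinitesimal generators of $T_\varGamma$ at $\mb z$ has entries $(2\pi)^2\sum_k\gamma_{jk}\gamma_{j'k}|z_k|^2$, and the relations $\sum_k\gamma_{jk}|z_k|^2=c_j$ do \emph{not} pin these quantities down. For instance, with a single quadric $|z_1|^2+2|z_2|^2=1$ and the circle acting with weights $(1,2)$, the orbit length is $2\pi\sqrt{|z_1|^2+4|z_2|^2}=2\pi\sqrt{1+2|z_2|^2}$, which varies. Consequently your identity $\mathrm{vol}(\widetilde N)=c\cdot\mathrm{vol}(N)$ fails, the first-variation comparison collapses, and there is no reason for $H$-minimality of $\widetilde N\subset\C^m$ to descend to $H$-minimality of $N\subset V$. (The lifted Hamiltonian variation of $N$ moves the $|z_k|$ inside $\mathcal Z_\varGamma$ and hence also changes the fibre volumes, so the ``preserving the fibrewise volume'' assertion is wrong for the same reason.) In short, the implication you need --- $H$-minimal upstairs $\Rightarrow$ $H$-minimal downstairs under K\"ahler reduction --- is not available, and your proposed mechanism for it does not work.

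The paper avoids this entirely by reducing in the \emph{other} direction. Instead of lifting to $\C^m$, it takes the further symplectic quotient $\widehat V=(\mathcal Z_\varGamma\cap\mathcal Z_\varDelta)/(T_\varGamma\times T_\varDelta)$ and observes that $\widehat N=N/T_\varDelta=(\mathcal R_\varGamma\cap\mathcal R_\varDelta)/(D_\varGamma\times D_\varDelta)$ is the real locus of $\widehat V$, hence totally geodesic and in particular \emph{minimal}. Then it invokes Dong's criterion \cite[Cor.~2.7]{dong07}, which goes the opposite way to what you attempted: minimality of the reduced Lagrangian $\widehat N\subset\widehat V$ implies $H$-minimality of $N\subset V$. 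This direction is the one that actually holds, and it absorbs precisely the orbit-volume variation that broke your argument.
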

\begin{proof}
Let $\widehat V$ be the symplectic quotient of $V$ by the torus
corresponding to the second set of quadrics, that is, $\widehat
V=(V\cap\mathcal Z_\varDelta)/T_\varDelta=(\mathcal
Z_{\varGamma}\cap\mathcal Z_{\varDelta})/(T_{\varGamma}\times
T_{\varDelta})$. It is a toric manifold of real dimension
$2(n+\ell-m)$. The submanifold of real points
\[
  \widehat N=N/T_{\varDelta}=(\mathcal R_{\varGamma}\cap\mathcal
  R_{\varDelta})/(D_{\varGamma}\times D_{\varDelta})\hookrightarrow(\mathcal
  Z_{\varGamma}\cap\mathcal Z_{\varDelta})/(T_{\varGamma}\times
  T_{\varDelta})=\widehat V
\]
is the fixed point set of the complex conjugation, hence it is a
totally geodesic submanifold. In particular, $\widehat N$ is a
minimal submanifold in~$\widehat V$. According
to~\cite[Cor.~2.7]{dong07}, $N$ is an $H$-minimal submanifold
in~$V$.
\end{proof}

\begin{example}\

1. If $m-\ell=0$, i.e. $\mathcal Z_{\varDelta}=\varnothing$, then
$V=\C^m$ and we get the original construction of $H$-minimal
Lagrangian submanifolds $N$ in~$\C^m$.

2. If $m-n=0$, i.e. $\mathcal Z_{\varGamma}=\varnothing$, then $N$
is set of real points of~$V$. It is minimal (totally geodesic).

3. If $m-\ell=1$, i.e. $\mathcal Z_{\varDelta}\cong S^{2m-1}$,
then we get $H$-minimal Lagrangian submanifolds in $V=\C P^{m-1}$.
This subsumes many previously constructed families of projective
examples.
\end{example}

\enlargethispage{3\baselineskip}
\end{document}